\DeclareFontFamily{U}{mathx}{\hyphenchar\font45}
\DeclareFontShape{U}{mathx}{m}{n}{
<5> <6> <7> <8> <9> <10>
<10.95> <12> <14.4> <17.28> <20.74> <24.88>
mathx10
}{}
\DeclareSymbolFont{mathx}{U}{mathx}{m}{n}
\DeclareMathAccent{\widecheck}{0}{mathx}{"71}
\def\R{\mathbb{R}}
\def\Z{\mathbb{Z}}
\def\N{\mathbb{N}}
\def\C{\mathbb{C}}
\def\F{\mathcal{F}}
\renewcommand{\L}{\mathrm{L}}
\newtheorem{theorem}{Theorem}
\newtheorem{lemma}[theorem]{Lemma}
\numberwithin{equation}{section}
\begin{document}
\title {Directional maximal function  along the primes}

\author{Laura Cladek}
\address{Department of Mathematics, UCLA, 520 Portola Plaza, Los Angeles, CA 90095, USA}
\email{cladek@math.ucla.edu}
\author{Polona Durcik}
\address{California Institute of Technology, 1200 E California Blvd, Pasadena, CA 91125, USA}
\email{durcik@caltech.edu}
\author{Ben Krause}
\address{Department of Mathematics, Princeton University, Princeton, NJ 08544, USA}
\email{benkrause2323@gmail.com}
\author{Jos{\'e} Madrid}
\address{Department of Mathematics, UCLA, 520 Portola Plaza, Los Angeles, CA 90095, USA}
\email{jmadrid@math.ucla.edu}

\date{\today}

\begin{abstract}
We study a two-dimensional discrete directional maximal operator along the set of the prime numbers. We show existence of a set of vectors, which are lattice points in a sufficiently large annulus, for which the $\ell^2$ norm of the associated maximal operator with supremum taken over all   large scales grows with an epsilon power in  the number of vectors. This paper  is a follow-up to a prior work on  the discrete directional maximal operator along the integers by the first and third author.
\end{abstract}

\maketitle

\section{Introduction}
A fundamental operator studied in Euclidean harmonic analysis is the directional maximal operator over a finite set of directions in the plane. Given a collection $V\subseteq \mathbb{S}^1$ of unit vectors, one defines the directional maximal operator  by
\begin{align}\label{contest}
M_Vf(x)= \sup_{v\in V}\sup_{r>0}\bigg|\frac{1}{2r}\int_{-r}^rf(x-tv)\,dt\bigg|,
\end{align}
 where $f:\mathbb{R}^2\rightarrow \mathbb{C}$.   The initial interest in this operator lies in its relation to the Kakeya maximal function in the plane, see \cite{St}. 
It was shown by Katz \cite{Katz1} that for any finite set of directions $V$ one has the sharp bound
\begin{align*}
\|M_Vf\|_{\L^2(\R^2)}\leq C (\log |V|)\, \|f\|_{\L^2(\R^2)}
\end{align*}
The  special case when $V$ is uniformly distributed   had been previously settled by Str\"omberg \cite{St}.  

In  \cite{CK}, Cladek and Krause investigated a discrete analogue of  $M_Vf$ restricted to a single dyadic scale. That is, for $V$ a set of lattice points living in an annulus in $\R^2$, a function   $f: \mathbb{Z}^2\to\mathbb{C}$ and $\phi$ a smooth compactly supported function on the real line, they define the single-scale maximal operators $A_{V, k}$ by
$$
A_{V, k}f(x)=\sup_{v\in V}\bigg|\sum_{n\in\mathbb{Z}}f(x-nv)2^{-k}\phi(2^{-k}n)\bigg|
$$
and study its operator norm on $\ell^2$.
By considering bumps adapted to unit scales one sees that the sharp estimate for $k$ close to one is given by $|V|^{1/2}$.  Since the discrete problem is not scale invariant, one may ask if   one can prove an estimate in the spirit of 
 \eqref{contest} for all sufficiently large scales with the threshold  depending on the set of directions. While the answer to this question seems    out of reach of the current techniques, the authors of \cite{CK} construct a collection of vectors for which this is the case. Moreover, their construction is sufficiently robust to   allow for supremum over all large scales. For each $N$ and $\epsilon>0$, they construct a set of vectors $V=V_{N, \epsilon}$ with $|V|=N$ which is  contained  in a large annulus of    radius and thickness about $A$, such that
$$
\big \|\sup_{k\geq  k_V} A_{V, k}f\, \big \|_{\ell^2(\mathbb{Z}^2)}\leq C_\epsilon N^{\epsilon} \|f\|_{\ell^2(\Z^2)},
$$ where $k_V =C_1 \log N$ for some large constant  $C_1$ depending on $\epsilon$. Furthermore, in \cite{CK} the authors also consider discrete maximal operators with Radon-type behavior where the averages are taken along polynomials $P$ with integer coefficients,
\begin{align*}
A^P_{V, k}f(x)=\sup_{v\in V}\bigg|\sum_{n\in\mathbb{Z}}f(x-vP(n))2^{-k}\phi(2^{-k}n)\bigg|.
\end{align*}
 In this case one has the bound 
\begin{align*}
\|A^P_{V, k}f\|_{\ell^2(\mathbb{Z}^2)}\lesssim_{\epsilon}N^{\epsilon} \|f\|_{\ell^2(\Z^2)}
\end{align*}
provided $k\geq k_V$ and $V$ is the collection described   above.
Studying averages along the  polynomial sequences can be viewed as a discrete version of the Furstenberg problem in the plane, see for instance \cite{KaTao}.

In this paper, we study a   problem where the directional averages are taken over {\em prime}, rather than all integer dilates of our vectors $v$.  
Let $k\in \mathbb{N}$ and   $V\subseteq \Z^2$  be  a finite collection of lattice points.  Let $\phi$ be a compactly supported function on the real line.  By $\mathbb{P}$ we denote the set of all prime numbers.  
For a finitely supported function $f:\Z^2\rightarrow \mathbb{C}$
we consider the directional maximal operator  
$$A^{\mathbb{P}}_{V,k}f(x)=\sup_{k\geq k_V}\sup_{v\in V}\Big |   \sum_{p\in \mathbb{P}} f(x-pv)2^{-k}\phi(2^{-k}p)\log p \, \Big |
$$
where $k_V$ is large depending on $|V|$. 
We prove the following bound.
\begin{theorem}
\label{mainthm1}
Let $\epsilon>0$ and $N>0$. There exist constants $C_\epsilon$ and $C_0$ depending on $\epsilon$ so that the    following holds. For any $A\geq N^{C_0}$
there exists a set of vectors $V\subseteq \{x\in \Z^2: \frac{1}{100}A\leq |x| \leq 100A\}$ with   $|V|=N$  so that 
$$\big \|\sup_{k\geq k_V} A^{\mathbb{P}}_{V,k} f\big \|_{\ell^2(\Z^2)} \leq C_\epsilon N^{\epsilon} \|f\|_{\ell^2(\Z^2)},$$
provided  $k_V \geq  N^{C_1}$ for a sufficiently large constant $C_1$ depending on $C_0$.
\end{theorem}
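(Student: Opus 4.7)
The plan is to follow the Hardy--Littlewood--Vinogradov circle method to reduce the averaging problem along primes to the integer averaging problem handled by Cladek--Krause \cite{CK}, in the spirit of Bourgain's work on ergodic theorems along the primes.

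First I would replace the sum over primes $p$ with weight $\log p$ by a sum over integers $n$ weighted by the von Mangoldt function $\Lambda(n)$; the difference, contributed by prime powers $p^j$ with $j\geq 2$, is of lower order and easily controlled. Next I would analyze the Fourier multiplier
$$m_k(\alpha) = \sum_{n\in\Z} 2^{-k}\phi(2^{-k}n) \Lambda(n) e^{2\pi i n\alpha}$$
and decompose it via the circle method: with $Q = (\log 2^k)^B$ for a large parameter $B$, write $m_k = m_k^{\mathrm{maj}} + m_k^{\mathrm{min}}$, where $m_k^{\mathrm{maj}}$ is supported on a union of arcs centred at rationals $a/q$ with $q\leq Q$, $(a,q)=1$, and $m_k^{\mathrm{min}}$ is the minor-arc remainder.

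For the minor arcs, the Vinogradov exponential-sum estimate yields $\|m_k^{\mathrm{min}}\|_{\L^\infty(\mathbb{T})}\leq C(\log 2^k)^{-A}$ for any fixed $A$, provided $B$ is chosen large enough in terms of $A$. Each single-direction operator $T_k^{v,\mathrm{min}}$ therefore has $\ell^2$-norm at most $C(\log 2^k)^{-A}$, and by the trivial bound $\|\sup_v |T_k^{v,\mathrm{min}}f|\|_{\ell^2}^2 \leq \sum_v \|T_k^{v,\mathrm{min}}f\|_{\ell^2}^2$ (losing a factor $N^{1/2}$ over directions, and an additional logarithmic factor upon summing over dyadic scales $k\geq k_V$), the total minor-arc contribution is at most $C N^{1/2}(\log 2^{k_V})^{-A+1}\|f\|_{\ell^2}$. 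Since $k_V\geq N^{C_1}$, this is bounded by $N^{1/2-AC_1+o(1)}\|f\|_{\ell^2}$, which is $\leq N^\epsilon\|f\|_{\ell^2}$ once $A$ is chosen large enough in terms of $\epsilon$ and $C_1$.

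For the major arcs, on each arc the multiplier is $\frac{\mu(q)}{\varphi(q)}\hat\phi(2^k(\alpha-a/q))$ up to negligible error. The corresponding operator in physical space has kernel $e^{2\pi ian/q}\cdot 2^{-k}\phi(2^{-k}n)$; splitting $n=qm+r$ modulo $q$, it becomes a weighted sum over residues $r\in\{0,\ldots,q-1\}$ of integer averaging operators along the dilated vector $qv$ at scale $2^k/q$, evaluated at the shifted input $f(\cdot-rv)$. Applying the Cladek--Krause bound \cite{CK} to each such integer average (using the set $qV\subseteq\Z^2$, which still lies in an annulus of radius comparable to a polynomial in $N$) yields an $N^\epsilon$ estimate, and since the ranges of $q,a,r$ are all bounded by $Q=(\log 2^k)^B$, which is polylogarithmic in $N$ under the hypothesis $k_V\geq N^{C_1}$, summing produces a total of $(\log N)^{O(B)}\cdot N^\epsilon$, absorbed by a modest re-parameterisation of $\epsilon$. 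The principal obstacle is the handling of the $v$-dependent shifts $f\mapsto f(\cdot-rv)$ appearing in the residue decomposition: these are coupled to the supremum over $v\in V$ and cannot be discarded by translation invariance. Although $|rv|\leq QA$ is merely a fixed polynomial in $N$, extending the \cite{CK} bound to shifted directional averages of this form appears to require either a direct adaptation of the argument of \cite{CK} or a reorganization of the Fourier decomposition that avoids the explicit residue splitting.
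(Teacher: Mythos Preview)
Your minor-arc treatment matches the paper's. The major-arc argument, however, has the genuine gap you yourself flag at the end, and the paper resolves it by a route you have not anticipated. Two further errors compound the difficulty.

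First, a quantitative slip: under the hypothesis $k_V\geq N^{C_1}$ one has $\log 2^k\gtrsim k\geq N^{C_1}$, so $Q=(\log 2^k)^B$ is at least $N^{C_1B}$, which is polynomial in $N$, not polylogarithmic. Your claim that summing over $q,a,r\leq Q$ costs only $(\log N)^{O(B)}$ is therefore false; one must actually use the $\varphi(q)^{-1}$ decay to sum in $q$, and this in turn forces one to avoid any crude sum over residues $r$.

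Second, and more seriously, the $v$-dependent shifts $f\mapsto f(\cdot-rv)$ genuinely obstruct a black-box application of the Cladek--Krause theorem. The paper does not attempt this. Instead it works entirely on the multiplier side and uses not the main result of \cite{CK} but the underlying \emph{incidence estimate} (Lemma~\ref{prop:incidence} here), which bounds the overlap function $\sum_{q}\sum_{v}\mathbf{1}_{K_{q,s,v}}$ in $\L^\infty$ outside a small ball around the origin. The argument splits $\widehat f$ into a low-frequency part supported on $\{|\beta|\leq A^{-2}\}$ and a high-frequency complement. For high frequencies, the key device that eliminates the shift problem is to replace the sum over $a$ coprime to $q$ by a sum over \emph{all} $1\leq a\leq q$, at the cost of introducing a Fourier projection $f_{q,v}$ onto the original coprime tubes; the full sum $\sum_{a=1}^q e(na/q)=q\mathbf{1}_{q\mid n}$ then collapses the operator to a single unshifted average along $qv$, to which a one-dimensional maximal bound applies via transference. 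The incidence lemma is then invoked on $\|f_{q,v}\|_{\ell^2}$. For low frequencies, the paper instead applies Bourgain's transference lemma together with Katz's sharp bound for the continuous directional maximal operator, since inside the small ball each direction sees at most $O(1)$ major-arc bumps. Your residue-splitting $n=qm+r$ keeps the coprimality constraint on $a$ and so never accesses the cancellation $\sum_{a=1}^q e(na/q)=q\mathbf{1}_{q\mid n}$; this is precisely the missing idea.
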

To prove Theorem \ref{mainthm1} one first analyzes the corresponding multipliers, which is standard using the Hardy-Littlewood circle method. See for instance \cite{MTZK}. For each $v$ in our set of directions, the corresponding multiplier may be described in terms of its coordinate in the $v^{\perp}$-axis, and is constant in the orthogonal direction. The corresponding one-dimensional multiplier is supported on disjoint frequency bubbles located near certain rational numbers. This requires an analysis of multiple frequencies simultaneously, which is a key distinguishing feature of many problems in discrete harmonic analysis. In the case of high frequencies, we must moreover analyze multipliers corresponding to different directions $v$ simultaneously, which leads to both number theoretic and geometric considerations. Using harmonic analytic techniques, we reduce the boundedness of the maximal function to an incidence estimate proven in \cite{CK}, which measures overlaps of certain tubes pointing in the directions given by the vectors in our set. We only need to apply this incidence estimate
outside a ball of a fixed radius, which  is sufficiently small to capture only a single tube in each direction.
 Inside this ball one applies transference arguments to extend the continuous result to the discrete result. 

In contrast with the squares, in the case of the primes we are able to 
prove   bounds even with the further introduction of a supremum over scales. The key is that we are able to exploit the near perfect cancellation of the Ramanujan sums, which appear in the corresponding multiplier approximations. This can be seen as a quantitative consequence of the well-known heuristic that the distribution of the primes exhibits a high degree of randomness. Exploiting this cancellation is crucial to our method of proof.  Nevertheless, it would be interesting to see if one can prove $\ell^2$ bounds with the supremum over scales in the case of the squares as well.
 
The problem studied in this paper is related to the works by Bourgain \cite{Bou2}, Wierdl \cite{Wi}, and Mirek, Trojan and Zorin-Kranich \cite{MTZK}. These papers are concerned with maximal inequalities for discrete versions of the Hardy-Littlewood maximal operator along the primes, motivated  by pointwise convergence of  ergodic averages along  the primes.
The paper \cite{MTZK} shows   variational estimates for discrete  maximal operators and for maximal truncation of singular integrals along the primes. It would be interesting to see if  a suitable modification of our arguments could be applied to study variational estimates for discrete  directional maximal operators and directional singular integrals. The latter have also attracted much attention in the continuous setting, see for instance   Demeter \cite{D}.   

Higher-dimensional versions of Theorem \ref{mainthm1} and the results from \cite{CK} are topics for further investigation. For the continuous analogues in higher dimensions  we refer to the works by Demeter \cite{D2}, di Plinio and Parissis \cite{DpP}, and the  references therein.

\section{Proof of Theorem \ref{mainthm1}}
First let us say a few words on the notation. We write $e(t)=e^{2\pi i t}$.  For a finitely supported function $f$ on $\Z^d$ we define the Fourier transform 
$$\widehat{f}(\beta) = \sum_{n\in \Z^d}f(n)e(-\beta\cdot  n),$$
taking values in $\mathbb{T}^d$. Its inverse  will be denoted by 
$$\mathcal{F}^{-1}{g}(n) = \int_{\mathbb{T}^d}g(\beta)e(\beta \cdot  n)d\beta.$$
The Fourier transform on $\R$ and its inverse will be defined with the same symbols and   we will mention their use explicitly.  We will also silently use the standard identification of  functions on $\mathbb{T}^d$ with one-periodic function on $\R^d$, i.e.  functions  $g:\R^d\rightarrow \C$ satisfying $g(x+y)=g(x)$ for all $y\in \Z^d$.   
 We write $A\lesssim B$ if there exists an absolute constant $C$ such that  $A\leq CB$. If the constant depends on parameters $p_1,\ldots, p_n$ we denote that with a subscript, such as $A\lesssim _{p_1,\ldots, p_n}B$. We write $A\sim B$ if both $A\lesssim B$ and $B\lesssim A$.

 The first step in the proof of Theorem \ref{mainthm1} is to decompose the multipliers   of  the maximal operator.
For $\alpha\in \R$ we define
\begin{align*}
m_k(\alpha)&=\sum_{p\in \mathbb{P}} e(-p\alpha)2^{-k}\phi(2^{-k}p) \log p. 
\end{align*}
Then we may view $A^{\mathbb{P}}_Vf$ as a maximal operator  with the multipliers given by $\{m_k(v \cdot                                                                                                                                                                                                                                \beta )\}_{v\in V}$,
\begin{align*}
A^{\mathbb{P}}_Vf(x)= \sup_{k\geq k_V}  \sup_{v\in V} \big |\mathcal{F}^{-1}(m_k(v\cdot )\widehat{f}) (x) \big | .
\end{align*}

For the decomposition of the multipliers we need to define several auxilliary functions. Let $\chi:\R\rightarrow \R$ be a non-negative smooth function which is supported in $|x|\leq 1/2$ and equals $1$ on $|x|\leq 1/4$. 
Then we set
$$\chi_s(\alpha)=\chi(2^{10(s+4)}\alpha).$$
For $\alpha\in \R$ we define
\begin{align*}
V_k(\alpha) = \int_{\R}e(2^kt \alpha) \phi(t)dt.
\end{align*}
By $\mu(q)$ and $\varphi(q)$ we denote the M\"obius and totient functions, respectively.  A key estimate on the totient function is 
\begin{align}
\label{totient}
\varphi(q) \gtrsim_\delta q^{1-\delta},
\end{align}
valid for any $\delta>0$. 
Finally, we define  $\mathcal{R}_0 = \{0\}$ and 
for $s\in \mathbb{N}$ we  set
$$\mathcal{R}_s=\{a/q \in \mathbb{T}\cap \mathbb{Q} : 2^s\leq q < 2^{s+1}, \, (a,q)=1\}.$$
We recall the following approximation result from \cite{MTZK}.
\begin{lemma} [from \cite{MTZK}]
\label{lemmaapprox}
Let $k\geq 0$ be an integer. For any $D>2^4$ there is a constant $C=C(D)$ such that for   $\alpha\in \mathbb{R}$ one can approximate 
$$m_k (\alpha) = L_k(\alpha) + E_k(\alpha)$$
where 
\begin{align*}
L_k(\alpha) = \sum_{s\in \mathbb{N}_0} L_{k,s}(\alpha),\quad L_{k,s}(\alpha) =   \sum_{a/q\in \mathcal{R}_s}\frac{\mu(q)}{\varphi(q)}V_k(\alpha-a/q)\chi_s(\alpha-a/q),
\end{align*}
and   $E_k$    satisfies the   pointwise  estimate  
\begin{align}
\label{decayEk}
|E_k|\leq C k^{-D/8} .
\end{align}
\end{lemma}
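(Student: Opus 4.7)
The plan is to follow the standard Hardy-Littlewood circle method for exponential sums over primes, as carried out in \cite{MTZK}. The decomposition of $L_k$ into pieces $L_{k,s}$ localized near rationals with denominators in dyadic blocks $[2^s,2^{s+1})$ reflects exactly the major-arc structure of that method.

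First, by Dirichlet's approximation theorem, every $\alpha\in\R$ may be written as $\alpha=a/q+\beta$ with $(a,q)=1$, $1\leq q\leq Q:=2^{k/2}$ and $|\beta|\leq 1/(qQ)$. I fix a threshold $s_0:=\lceil C''\log_2 k\rceil$ (with $C''=C''(D)$ to be chosen large) and say $\alpha$ lies in the \emph{major arcs} if the associated $q$ satisfies $q<2^{s_0+1}$, and in the \emph{minor arcs} otherwise. Note that distinct elements of $\mathcal{R}_s$ are separated by at least $1/(2^{s+1})^2\geq 2^{-2s-2}$, while $\chi_s(\alpha-a/q)$ is supported in $|\alpha-a/q|\leq 2^{-10(s+4)-1}$, so for any given $\alpha$ at most one rational in $\mathcal{R}_s$ contributes to $L_{k,s}(\alpha)$, and distinct dyadic scales also produce at most one competing rational.

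On the major arcs, I split the sum over primes into residue classes modulo $q$ and apply the Siegel-Walfisz theorem (prime number theorem in arithmetic progressions, with error $\exp(-c\sqrt{\log 2^k})$ uniform for $q\leq (\log 2^k)^B$), followed by partial summation against the smooth weight $2^{-k}\phi(2^{-k}\cdot)$. The Ramanujan sum identity $\sum_{a:(a,q)=1}e(-a\ell/q)=\mu(q)$ for $(\ell,q)=1$ produces the factor $\mu(q)/\varphi(q)$, and Riemann-sum approximation of the smooth average by its integral recovers $V_k(\beta)$ up to negligible error, so that
\begin{align*}
m_k(a/q+\beta)=\frac{\mu(q)}{\varphi(q)}V_k(\beta)+O_D\bigl(\exp(-c\sqrt k)\bigr).
\end{align*}
Inserting the cutoff $\chi_s(\alpha-a/q)$ and summing over $s\leq s_0$ produces the main term $L_k$ up to an error of size $\exp(-c\sqrt k)\ll k^{-D/8}$.

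On the minor arcs, I invoke Vinogradov's bound for exponential sums over primes, proved via Vaughan's identity: for any $\alpha$ with Dirichlet denominator $q$,
\begin{align*}
|m_k(\alpha)|\lesssim k^{A}\bigl(q^{-1/2}+2^{-k/5}+(q/2^k)^{1/2}\bigr),
\end{align*}
with $A$ an absolute constant. In the regime $q\geq 2^{s_0}$ this yields $|m_k(\alpha)|\lesssim k^{A}\bigl(2^{-s_0/2}+2^{-k/5}\bigr)\leq k^{-D/8}$, provided $C''\geq D/4+2A$. The contribution of the tail $s>s_0$ to $L_k$ is absorbed by combining the totient bound \eqref{totient}, which yields $|\mu(q)|/\varphi(q)\lesssim_\delta 2^{-s(1-\delta)}$, with the boundedness of $V_k$ and the single-rational-per-$\alpha$ observation above. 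The main obstacle is the minor-arc step: this is where the polynomial-in-$k$ loss in the error actually originates, and executing Vaughan's identity with uniform control of all parameters (in order to convert $q^{-1/2}$ into $k^{-D/8}$ after paying the $k^A$ tax) is the technical core of the argument, and accounts for the hypothesis $D>2^4$ together with the exponent $D/8$. The major-arc step is comparatively routine — partial summation plus Siegel-Walfisz — and the dyadic $\chi_s$ cutoffs are purely bookkeeping to disentangle the major arcs into disjoint pieces indexed by the size of the denominator.
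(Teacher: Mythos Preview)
Your proposal is correct and follows essentially the same route as the paper: both invoke the Hardy--Littlewood circle method with major arcs indexed by denominators $q\lesssim k^{C}$, use Siegel--Walfisz plus partial summation on the major arcs to extract $\frac{\mu(q)}{\varphi(q)}V_k(\alpha-a/q)$, and Vinogradov's estimate (via Vaughan) on the minor arcs to get the $k^{-D/8}$ decay. The paper simply cites Propositions~3.1 and~3.2 and Theorem~2 of \cite{MTZK} for these two steps, whereas you sketch them explicitly; the content is the same.
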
 
Note that $L_k$ is a periodic function and hence can be viewed as a function on $\mathbb{T}$.
Observe that due to the support of $\chi_s$, for each fixed $\alpha$,  the sum defining $L_{k,s}$ is non-zero for at most one term.

The proof of  Lemma \ref{lemmaapprox} proceeds by the Hardy-Littlewood circle method.
The key step is  in  establishing that the multipliers $m_k$ satisfy 
\begin{align*}
\Big | m_k(\alpha) -\frac{\mu(q)}{\varphi(q)}V_k(\alpha-a/q)\Big | &\lesssim_D\,  k^{-D/8}  \quad \textup{if} \quad  \alpha\in \mathcal{M}_k^D(a/q)\cap \mathcal{M}_k^D, \\
|m_k(\alpha)| & \lesssim_D \, k^{-D/8} \quad \textup{if}\quad  \alpha\in \mathbb{T}\setminus \mathcal{M}_k^D,
\end{align*}
where  for $D>0$ and $k\in \mathbb{N}$ the major arcs are defined by 
 \begin{align*}
 \mathcal{M}_k^D = \bigcup_{1\leq q\leq k^D} \bigcup_{a\leq q: (a,q)=1}\mathcal{M}_k^D(a/q), \quad \textup{where} \quad  \mathcal{M}_k^D(a/q) = \{\alpha\in \mathbb{T} : |\alpha-a/q| \leq 2^{-k}k^D\},
 \end{align*}
 and the set $\mathbb{T}\setminus \mathcal{M}_k^D$ is    the minor arc. These two estimates follow from Proposition 3.1 and Proposition 3.2 in \cite{MTZK}, respectively.  Theorem 2 in \cite{MTZK} then yields the result claimed in  Lemma \ref{lemmaapprox}.
  
In all of  the following we fix $\epsilon>0$, $N>0$. We also   assume $k\geq k_V$.  
Because of Lemma \ref{lemmaapprox} it suffices to prove $\ell^2$ bounds for  the operators associated with the multipliers $L_k$ and $E_k$. 
First we prove the desired bound for the error term $E_k$. This follows from   
$$\Big \| \Big ( \sum_{v\in V} \sup_{k\geq k_V} |\F^{-1}(E_k(v\cdot )\widehat{f})|^2 \Big )^{1/2}\Big \|_{\ell^2(\Z^2)} \lesssim k_V^{-1} N^{1/2}  \|f\|_{\ell^2(\Z^2)} \lesssim_\epsilon N^\epsilon \|f\|_{\ell^2(\Z^2)},$$
where we have used Plancherel and \eqref{decayEk}. 
Therefore, in order   to prove Theorem \ref{mainthm1}  
it suffices to obtain $\ell^2$ bounds for 
\begin{align}\label{Lvf}
 \mathcal{L}_Vf= \sup_{k\geq k_V} \sup_{v\in V} \big | \F^{-1}(L_{k} (v\cdot )\widehat{f})  \big |.
\end{align}

Before estimating $\mathcal{L}_Vf$ we first make   a simple   preliminary reduction. 
The following lemma tells us that when $s$ is small, we can replace the localization $\chi_s$ with the improved localization $\chi_{k_V/100}$.
\begin{lemma}
For $\alpha\in \R$, $s\in \mathbb{N}_0$ and an integer $k\geq 0$ define
\begin{align*}
L_{k,s}'(\alpha) =   \sum_{a/q\in \mathcal{R}_s}\frac{\mu(q)}{\varphi(q)}V_k(\alpha-a/q)\chi_{k_V/100}(\alpha-a/q).
\end{align*}
Then we have
\begin{align*}
\sum_{s \leq \epsilon \log N} \| \sup_{k\geq k_V} \sup_{v\in V} \big | \F^{-1}((L_{k} -L'_k)(v\cdot )\widehat{f})  \big | \|_{\ell^2(\Z^2)} \lesssim N^\epsilon  \|f\|_{\ell^2(\Z^2)}.
\end{align*}
\end{lemma}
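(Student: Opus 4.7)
The strategy is to bootstrap a super-polynomial pointwise decay of $L_{k,s}-L'_{k,s}$ into the desired $\ell^2$ bound, summing crudely over $k\geq k_V$ and $v\in V$ via Plancherel. The decay arises because $V_k(\alpha)$ is essentially $\widehat\phi(-2^k\alpha)$ and therefore has Schwartz tails on the scale $2^{-k}\leq 2^{-k_V}$, whereas the difference of cutoffs $\chi_s-\chi_{k_V/100}$ is supported away from $0$ at the much coarser scale $\sim 2^{-k_V/10}$. This wide separation of scales easily absorbs any polynomial loss in $N$ or $|V|=N$ incurred by crude summation, so that the sum over $s\le\epsilon\log N$ is harmless.

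Concretely, first I would analyze the support of $(\chi_s-\chi_{k_V/100})(\alpha-a/q)$. For $s\leq \epsilon\log N$ and $k_V\geq N^{C_1}$ one has $k_V/100\gg s$, so $\chi_{k_V/100}(\alpha-a/q)\equiv 1$ whenever $|\alpha-a/q|\leq 2^{-10(k_V/100+4)-2}$, while $\chi_s(\alpha-a/q)\equiv 0$ outside $|\alpha-a/q|\leq 2^{-10(s+4)-1}$. Hence $(\chi_s-\chi_{k_V/100})(\alpha-a/q)$ is supported in an annulus whose inner radius is at least $2^{-k_V/10-42}$. On this annulus, for any integer $M\geq 1$ and any $k\geq k_V$, the Schwartz bound $|V_k(\alpha)|\lesssim_M(1+2^k|\alpha|)^{-M}$ gives
\begin{align*}
|V_k(\alpha-a/q)(\chi_s-\chi_{k_V/100})(\alpha-a/q)|\lesssim_M 2^{-M(k-k_V/10-42)}.
\end{align*}
Since at most one term in the sum over $a/q\in \mathcal{R}_s$ is non-zero for each fixed $\alpha$ (elements of $\mathcal{R}_s$ are separated by at least $2^{-2s-2}$, which dominates the support radius of $\chi_s$), and since $|\mu(q)/\varphi(q)|\leq 1$, one obtains the pointwise bound
\begin{align*}
\|L_{k,s}-L'_{k,s}\|_{L^\infty(\mathbb{T})}\lesssim_M 2^{-M(k-k_V/10-42)}.
\end{align*}

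To finish I would dominate $\sup_k\sup_v|\cdot|$ by a square function and apply Plancherel to each multiplier:
\begin{align*}
\Big\|\sup_{k\geq k_V}\sup_{v\in V}|\F^{-1}((L_{k,s}-L'_{k,s})(v\cdot)\widehat{f})|\Big\|_{\ell^2(\Z^2)}^2 \leq \sum_{k\geq k_V}\sum_{v\in V}\|L_{k,s}-L'_{k,s}\|_{L^\infty}^2\|f\|_{\ell^2(\Z^2)}^2.
\end{align*}
Taking $M$ a fixed integer $\geq 2$ and summing the geometric series in $k$ produces a bound of the form $\lesssim N\cdot 2^{-2M(0.9k_V-42)}\|f\|_{\ell^2(\Z^2)}^2$. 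Since $k_V\geq N^{C_1}$ this is super-polynomially small in $N$, so after taking square roots and summing over the $\lesssim \log N$ values of $s\leq\epsilon\log N$ the total remains vastly smaller than $N^\epsilon\|f\|_{\ell^2(\Z^2)}$. I do not anticipate any substantive difficulty; the only point requiring slight care is the support computation, and the whole argument rests on the quantitative mismatch between the scale $2^{-k}$ of the Schwartz tail of $V_k$ and the scale $2^{-k_V/10}$ of the annulus on which $\chi_s-\chi_{k_V/100}$ is supported.
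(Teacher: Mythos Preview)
Your proposal is correct and follows essentially the same approach as the paper: both arguments observe that $\chi_s-\chi_{k_V/100}$ is supported where $|\alpha-a/q|\gtrsim 2^{-k_V/10}$, use the decay of $V_k$ on this region, and then absorb crude square-function summation over $k\geq k_V$ and $v\in V$ via Plancherel. The only cosmetic difference is that the paper uses the first-order bound $|V_k(\alpha)|\lesssim (2^k|\alpha|)^{-1}$ from a single integration by parts (yielding $\lesssim 2^{-k/2}$ on the relevant support), whereas you invoke the full Schwartz decay $(1+2^k|\alpha|)^{-M}$; either is far more than enough.
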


\begin{proof}
Note that $\chi_s-{\chi}_{k_V/100}$ vanishes on $|\alpha|\lesssim 2^{-k/10}$ for any $k\geq k_V$ provided $C_1$ from Theorem \ref{mainthm1} is sufficiently large. Integrating by parts we show the decay estimate 
\begin{align*}
|V_k(\alpha)| \lesssim (2^k|\alpha|)^{-1} \lesssim 2^{-k/2}.
\end{align*}
By the  lower  bound on the totient function \eqref{totient} and Plancherel this immediately gives  
\begin{align*}
\sum_{s\leq \epsilon \log N} 2^{s(\delta-1)} \Big \|  \sum_{2^{s}\leq q < 2^{s+1}} \Big(  &  \sum_{k\ge k_V}  \sum_{v\in V}  \Big | \F^{-1}\Big(  \sum_{a:a/q\in \mathcal{R}_s} V_k(v\cdot \beta -a/q) \\
&  (\chi_s- {\chi}_{k_V/100})(v\cdot \beta-a/q) \widehat{f}(\beta) \Big) \Big| ^2 \Big )^{1/2} \Big\|_{\ell^2(\Z^2)} \lesssim  N^{C\epsilon} \|f\|_{\ell^2(\Z^2)},
\end{align*}
for an absolute constant $C>0$, which implies the claim.
\end{proof}

The next step in estimating $ \mathcal{L}_Vf$ given in \eqref{Lvf} is to  split the function $f$ into a low and high frequency part.
Let $\vartheta$ be a   function  which is  smooth and supported  in  $\{\alpha\in \R^2: |\alpha|\leq A^{-2}\}$, where $A$ is the constant from  Theorem \ref{mainthm1}. Writing $f= f_1+f_2$, where we have set
\begin{align*}
 \widehat{f_1} =\widehat{f} \vartheta,\quad \widehat{f_2}=\widehat{f}(1-\vartheta ), 
\end{align*}
we split accordingly 
\begin{align}
\label{lowhigh}
\mathcal{L}_Vf \leq \mathcal{L}_Vf_1+ \mathcal{L}_Vf_2.
\end{align}

We now briefly describe the general approach that we will use to estimate $\mathcal{L}_Vf_1$ and $\mathcal{L}_Vf_2$.   
To deal with the high frequency part we will use  the following crucial incidence estimate from \cite{CK}. Let $s\geq 1$ be an integer and $C_1$ an integer which is assumed to be sufficiently large. For $2^{s}\leq r <2^{s+1}$ and $v\in V$ we  consider the sets
\begin{align*}
K_{r,s,v} = \{\beta \in \mathbb{T}^2, |\beta|\geq A^{-2}  : |v\cdot \beta - b/r-m|\leq 2^{-C_1s} \textup{ for some } 0\leq b\leq r, \,m\in \Z\}. 
\end{align*}
These sets arise naturally from the exponential sums which will appear in the multiplier approximations. 
They consist of equally spaced tubes of thickness about $2^{-C_1 s}A$ which are perpendicular to some $v\in V$, outside a ball of radius    $A^{-2}$. The following lemma measures the overlap of these tubes outside this ball.   
\begin{lemma}[\cite{CK}]
\label{prop:incidence}
Let $\epsilon >0$, $N>0$.  Let $s\geq 1$ be an integer.
 There exist constants $C_\epsilon$ and $C_0$ depending on $\epsilon$ so that the following holds. For any $A\geq N^{C_0}$
there exists a set of vectors $V\subseteq \{x\in \Z^2: \frac{1}{100} A\leq |x| \leq  100A\}$ with   $|V|=N$  so that, such that for each $N^{\epsilon}\leq 2^s\leq N^{1/\epsilon}$ one has 
$$\Big \| \sum_{2^{s}\leq r < 2^{s+1}}\sum_{v\in V} \mathbf{1}_{K_{r,s,v}} \Big \|_{\L^\infty(\mathbb{T}^2)} \lesssim_\epsilon N^\epsilon$$
provided $C_1=C_1(A)$ in the definition of $K_{r,s,v}$ is chosen sufficiently large.
\end{lemma}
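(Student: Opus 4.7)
\textbf{Plan for Lemma \ref{prop:incidence}.} The strategy is a probabilistic construction: select the $N$ vectors of $V$ independently and uniformly at random from the lattice points in the annulus $\{x\in \Z^2 : \tfrac{1}{100}A\leq |x|\leq 100A\}$, and show that with positive probability the desired $\L^\infty$ bound holds, which forces existence of a good deterministic $V$.

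The key estimate is the first-moment calculation at a fixed $\beta\in \mathbb{T}^2$ with $|\beta|\geq A^{-2}$. For each $r\in [2^s,2^{s+1})$, the set of $v\in \R^2$ with $|v\cdot \beta - b/r - m|\leq 2^{-C_1 s}$ for some $0\leq b\leq r$ and $m\in \Z$ is a union of strips perpendicular to $\beta$, of thickness $\sim 2^{-C_1 s}/|\beta|$ and spacing $\sim 1/(r|\beta|)$. Intersecting with the annulus yields total area $\sim A^2 r\cdot 2^{-C_1 s}$, so the probability that a uniformly chosen lattice vector lies in this set is $\sim r\cdot 2^{-C_1 s}$, provided $A\geq N^{C_0}$ with $C_0$ large enough to ensure equidistribution of lattice points in such thin strips (here the lower bound $|\beta|\geq A^{-2}$ keeps the strips from becoming pathologically thin). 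Summing over $r\in [2^s,2^{s+1})$ and over the $N$ vectors gives expected count $\mu \lesssim N\cdot 2^{(2-C_1)s}$, which, using $2^s\geq N^\epsilon$, can be made as small as any fixed negative power of $N$ once $C_1$ is chosen large in terms of $1/\epsilon$.

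Next I would discretize $\mathbb{T}^2$ at scale $\eta = 2^{-C_1 s}/A$, producing $\lesssim (A\cdot 2^{C_1 s})^2$ grid points. Since $|v\cdot (\beta-\beta_0)|\leq A\eta\leq 2^{-C_1 s}$ for $\beta_0$ in the grid cell of $\beta$, passing to the nearest grid point only inflates the tube thickness by a factor of two, so it suffices to bound the supremum over the grid. At each grid point $\beta_0$ one applies a Bernstein/Chernoff concentration inequality to $X(\beta_0)=\sum_{v} Y_v$, where $Y_v=\sum_{r}\mathbf{1}_{K_{r,s,v}}(\beta_0)$ are independent across $v\in V$, bounded by $2^s$, and of small mean; this yields superpolynomial control on $\mathbb{P}[X(\beta_0)\geq N^\epsilon]$, which is more than enough to beat the union bound over the polynomial-in-$N$ grid once $C_1$ is chosen sufficiently large.

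The main obstacle is balancing the three large constants: $C_1$ must be big enough that the first moment is tiny and concentration defeats the union bound over a grid of cardinality $\sim A^2\cdot 2^{2C_1 s}$, while $C_0$ must be chosen large in comparison to $C_1$ so that lattice points are genuinely equidistributed in the very thin strips of width $2^{-C_1 s}/|\beta|$ inside the annulus. Navigating this interplay between $C_0$, $C_1$, and the spectral lower bound $|\beta|\geq A^{-2}$ is precisely what dictates the relations $A\geq N^{C_0}$ and $C_1=C_1(A)$ in the statement.
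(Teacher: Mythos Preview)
Your approach is entirely different from the paper's. The paper gives an \emph{explicit arithmetic construction}: each $v_i$ is (after rescaling by $A$) a small integer vector times a product of $\kappa\sim 2/\epsilon$ primes drawn from a fixed pool $P_M$ of $N^{\epsilon/2}$ large primes, with distinct $\kappa$-subsets for distinct $i$. The incidence bound is then proved by a Chinese-Remainder--type argument: for any $N^\epsilon$ vectors one can extract many pairs whose $y$-components share a large prime factor, and these divisibility constraints force the common intersection of the associated tube families into a neighbourhood of the origin of radius $<A^{-1}$, which is excluded by the hypothesis $|\beta|\geq A^{-2}$. The prime structure is not decorative; it is precisely what makes intersecting tube families from \emph{distinct} directions collapse.

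Your probabilistic scheme has genuine gaps that I do not see how to repair. First, the grid is not ``polynomial in $N$'': the lemma must hold for \emph{every} $A\geq N^{C_0}$, and your net has $\gtrsim A^2\cdot 2^{2C_1 s}$ points. Since the only randomness is in the $N$ choices of $v$, any concentration bound you prove is a function of $N$ alone and cannot beat a union bound over $A^2$ points once, say, $A\geq \exp(N)$; taking $C_1=C_1(A)$ large only refines the grid further. Second, the concentration at a single $\beta$ is already too weak: each $Y_v=\sum_r\mathbf{1}_{K_{r,s,v}}(\beta)$ takes values in $[0,2^s]$, and at the top of the range $2^s=N^{1/\epsilon}$, so Bernstein gives at best $\mathbb{P}[X\geq N^\epsilon]\lesssim\exp(-cN^{\epsilon-1/\epsilon})$, which is not small. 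Third, there is a deterministic obstruction that no random choice can avoid: for any $v$ whatsoever, pick $\beta\perp v$ with $|\beta|\geq A^{-2}$; then $v\cdot\beta=0$ and $\beta\in K_{r,s,v}$ for every $r$ (via $b=m=0$), so already a single direction contributes $2^s$ to the sum. What the appendix actually establishes, and what the applications use, is that no $\beta$ lies in $N^\epsilon$ families $K_{r_v,s,v}$ with \emph{pairwise distinct} $v$; your first-moment calculation does not separate ``many tubes from one $v$'' from ``one tube from many $v$'s'', and it is exactly this separation that the arithmetic construction is designed to achieve.
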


The construction of the set  $V$ from Lemma \ref{prop:incidence}  is done by choosing vectors in different directions, such that after an appropriate rescaling their components have a prime factorization structure that exhibit an intermediate amount of randomness. An upper bound on the randomness allows for a certain degree of compatibility between the arithmetic structure between vector coordinates, which  gives rise to the appearance of long arithmetic progressions of large prime gap length in the corresponding intersecting sets $K_{r,s, v}$. A lower bound on the randomness allows for these long arithmetic progressions to have different prime spacings, making it very difficult for a point to lie in the simultaneous intersection of these sets.  We refer to Appendix \ref{appendix:incidence} for a more detailed summary of the construction from \cite{CK}.

To  deal with low frequencies we  will   use the following transference   lemma by  Bourgain  \cite{Bou2}. It says that if one has a maximal multiplier operator with multipliers supported on a fundamental domain of $\mathbb{T}^d$, which is bounded on $\L^2(\R^d)$, then the maximal operator corresponding to the periodization of these multipliers is bounded on $\ell^2(\Z^d)$.
\begin{lemma} [\cite{Bou2}]
\label{lemma:bourgain}
Suppose $\{\phi_k: k \geq 1\}$ is a countable family of functions on $\R^d$ supported in $[-1/2,1/2)^d$ with $\sup_k |\phi_k|\leq 1$. Further, suppose that for 
$$Mf(x)=\sup_k|\F^{-1}(\phi_k \widehat{f}) (x)|$$
there exists a constant $B$ such that for all $f\in \L^2(\R^d)$ one has 
\begin{align}\label{conthyp}\|Mf\|_{\L^2(\R^d)}\leq B \|f\|_{\L^2(\R^d)}\end{align}
Then  there exists a constant $C$ such that for all $f\in \ell^2(\Z^d)$ one has
\begin{align*}
\|Mf\|_{\ell^2(\mathbb{Z}^d)}\leq C B \|f\|_{\ell^2(\mathbb{Z}^d)}
\end{align*}
\end{lemma}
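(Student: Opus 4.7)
The plan is to use the standard Bourgain--Magyar--Stein--Wainger transference argument. First I would lift each $g \in \ell^2(\Z^d)$ of finite support to a continuous function $f$ on $\R^d$ whose Fourier transform encodes the discrete Fourier transform of $g$. Specifically, fix once and for all a Schwartz function $\chi \in \mathcal{S}(\R^d)$ such that $\widehat{\chi}$ is smooth, identically $1$ on $[-1/2, 1/2]^d$, and compactly supported in a slight enlargement of this cube, and set
\begin{align*}
f(x) = \sum_{n \in \Z^d} g(n) \chi(x - n).
\end{align*}
Then $\widehat{f}(\xi) = \widehat{\chi}(\xi) \widehat{g}(\xi)$, where $\widehat{g}(\xi) = \sum_n g(n) e(-\xi\cdot n)$ is interpreted as the $\Z^d$-periodic discrete Fourier transform of $g$ viewed on $\R^d$. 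Because $\widehat{\chi} \equiv 1$ on the common support of the $\phi_k$, the identity $\phi_k \widehat{f} = \phi_k \widehat{g}$ holds on all of $\R^d$, and a Plancherel computation on a fundamental domain gives $\|f\|_{\L^2(\R^d)} \sim \|g\|_{\ell^2(\Z^d)}$ with constants depending only on $\chi$.

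Next I would identify the continuous and discrete multiplier operators at integer points. Let $T_k f = \F^{-1}(\phi_k \widehat{f})$ denote the continuous operator and let $\Phi_k$ denote the $\Z^d$-periodization of $\phi_k$ (which coincides with $\phi_k$ on the fundamental domain). The support of $\phi_k$ and the formula $\phi_k \widehat{f} = \phi_k \widehat{g}$ yield, for every $n \in \Z^d$,
\begin{align*}
T_k f(n) = \int_{\R^d} \phi_k(\xi)\, \widehat{g}(\xi)\, e(\xi \cdot n)\, d\xi = \int_{[-1/2, 1/2)^d} \Phi_k(\xi)\, \widehat{g}(\xi)\, e(\xi \cdot n)\, d\xi,
\end{align*}
and the right-hand side is precisely the discrete multiplier operator applied to $g$ and evaluated at $n$. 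Hence the discrete maximal value at $n$ equals $\sup_k |T_k f(n)|$.

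The remaining step is to pass from the $\ell^2(\Z^d)$ norm of $\sup_k |T_k f(n)|$ to $\|Mf\|_{\L^2(\R^d)}$. Since each $T_k f$ is band-limited to $[-1/2, 1/2)^d$, I would apply a local Plancherel--Polya inequality: fix a Schwartz $K$ with $\widehat{K} \equiv 1$ on the support of every $\phi_k$, so $T_k f = T_k f * K$, and weighted Cauchy--Schwarz gives, for any integer $N > d$,
\begin{align*}
|T_k f(n)|^2 \leq C_N \int_{\R^d} |T_k f(y)|^2 (1 + |y - n|)^{-N}\, dy,
\end{align*}
uniformly in $k$ and $n$. Since this inequality holds pointwise in $k$, the right-hand side is bounded above by the analogous integral with $|Mf(y)|^2$ in place of $|T_k f(y)|^2$. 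Summing over $n \in \Z^d$, applying Fubini, and using $\sum_n (1 + |y - n|)^{-N} \lesssim 1$ gives
\begin{align*}
\|Mg\|_{\ell^2(\Z^d)}^2 = \sum_n \sup_k |T_k f(n)|^2 \lesssim \|Mf\|_{\L^2(\R^d)}^2 \leq B^2 \|f\|_{\L^2(\R^d)}^2 \lesssim B^2 \|g\|_{\ell^2(\Z^d)}^2.
\end{align*}

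The main obstacle is that $Mf$ itself is not band-limited, so Plancherel--Polya cannot be applied to it directly; the crucial device is to apply the pointwise local sampling estimate at each $k$ and only then pass to the supremum inside the positive integral on the right-hand side.
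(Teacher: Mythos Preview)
The paper does not supply its own proof of this lemma; it is quoted from Bourgain \cite{Bou2} and used as a black box. Your argument is correct and is precisely the standard transference proof: lift $g$ to a band-limited $f$ via a smooth partition of unity on the frequency side, identify the discrete and continuous multiplier operators at lattice points, and then use the Plancherel--Polya sampling inequality for band-limited functions to pass from $\ell^2(\Z^d)$ to $\L^2(\R^d)$. The only point worth flagging is that you work with finitely supported $g$; the extension to all of $\ell^2(\Z^d)$ follows by a routine density/monotone-convergence argument for the countable supremum, which you might mention for completeness.
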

Here the use of the symbols for the Fourier transform should be interpreted either on $\R$ or $\Z$, respectively.

With these  results in mind  we turn to estimating $\mathcal{L}_Vf_1$ and $\mathcal{L}_Vf_2$ in \eqref{lowhigh}.

 \subsection{High frequency term $\mathcal{L}_Vf_2$} 
Applying the triangle inequality and use the the lower  bound on the totient function \eqref{totient}  it suffices to estimate the $\ell^2$ norm of
\begin{align}
  \sum_{s\in \mathbb{N}_0} 2^{s(\delta-1)} \sum_{2^{s}\leq q < 2^{s+1}} \sup_{k\ge k_V} \sup_{v\in V} \Big | \F^{-1}\Big(  \sum_{a:a/q\in \mathcal{R}_s}  V_k(v\cdot \beta -a/q)
\chi_{k_0}(v\cdot \beta-a/q) \widehat{f}_2(\beta) \Big)\Big|, \label{toestimate}
\end{align}
where $k_0=k_0(s)$ is given by
\begin{align*}
k_0(s) =\left \{ 
\begin{array}{lll}
k_V ,& s\leq \epsilon \log N\\
s ,&  s> \epsilon \log N .
\end{array}
\right .
\end{align*}
Replacing the supremum in $v$ by a square sum it suffices to estimate the $\ell^2$ norm of
\begin{align}\nonumber
 \sum_{s\in \N_0} 2^{s(\delta-1)} \sum_{2^{s}\leq q < 2^{s+1}} \Big ( \sum_{v\in V } \Big( \sup_{k\ge k_V}  \Big | \F^{-1}\Big( \sum_{a:a/q\in \mathcal{R}_s} & V_k(v \cdot \beta-a/q)\\\label{est1}
 & \chi_{k_0}(v\cdot \beta-a/q) \widehat{f}_2(\beta) \Big)\Big|\Big) ^2 \Big)^{1/2}.
\end{align}

Since for a  fixed  $2^{s}\leq q<2^{s+1}$ the support of $\chi_{k_0}$ is much smaller than the size of $q$, and $\chi_{k_0} = \chi_{k_0}\chi_{k_0-1}$, the last display can be identified with 
\begin{align}\nonumber
 \sum_{s\in \N_0} 2^{s(\delta-1)} \sum_{2^{s}\leq q < 2^{s+1}} \Big ( \sum_{v\in V } \Big( \sup_{k\ge k_V}  \Big | \F^{-1}\Big( &\sum_{m\in Z} \sum_{1\leq a\leq q} V_k(v\cdot \beta -a/q-m)\\ \label{termtosum}
 &\chi_{k_0}(v\cdot \beta-a/q-m)\widehat{f_{q,v}}(\beta) \Big)\Big|\Big)^2 \Big)^{1/2},
\end{align}
where $f_{q,v}$ is defined via 
$$\widehat{f_{q,v}}(\beta) =  \sum_{m\in \Z} \sum_{b\leq q: (b,q)=1}\chi_{k_0-1}(v\cdot \beta -b/q-m) \widehat{f}_2(\beta) .  $$
The advantage of \eqref{termtosum} is that the sum runs over $a\leq q$ rather than only coprime elements, which brings in additional cancellation.     

To bound \eqref{termtosum} we first sum the corresponding multiplier over $a\leq q$.
For fixed $k,q$ and $v$ we consider
\begin{align}\label{multiplieraq}
\F^{-1}\Big(  \sum_{1\leq a\leq q} \phi_{k,k_0} (v\cdot \beta-a/q) \widehat{f_{q,v}} (\beta)\Big )  
\end{align}
with  $\phi_{k,k_0} =  \sum_{m\in \Z}V_k(\cdot-m)  \chi_{k_0}(\cdot -m)$.
By Fourier inversion on $\phi_{k,k_0}$ we have  
\begin{align} \label{inverseft}
\sum_{1\leq a\leq q} \phi_{k,k_0}(v\cdot \beta -a/q)=\sum_{1\leq a\leq q} \sum_{n\in \mathbb{Z}} (\F^{-1}\phi_{k,k_0} )(n) e(- v\cdot \beta n) e(na/q ).
\end{align}
Using
\begin{align*}
\sum_{1\leq a\leq q}e(na/q ) = \left\{ \begin{array}{ll}
q & n=qj\, \textup{ for some  } j\in \Z \\
0&\textup{otherwise}
\end{array} \right .
\end{align*}
and splitting the sum in $n$ in \eqref{inverseft} into the cases depending on whether $n$ is divisible by $q$ or not,  we see that \eqref{inverseft} equals  
\begin{align*}
  q \sum_{j\in \mathbb{Z}} \F^{-1}\phi_{k,k_0} (jq) e(- v\cdot \beta jq).
\end{align*}
Therefore, \eqref{multiplieraq} can be written as 
\begin{align*}
 \mathcal{F}^{-1}({\phi_{k,k_0,q}}(qv\cdot \beta) \widehat{f_{q,v}}(\beta)) = \sum_{n\in \mathbb{Z}}f_{q,v}(x-n(qv)) \phi_{k,k_0,q}(n ),
\end{align*}
where we have defined $\phi_{k,k_0,q}$ via $\F^{-1}\phi_{k,k_0,q}(n) = q\F^{-1}\phi_{k,k_0}(qn)$.  

Next we would like to bound these operators for each fixed $q$ and $v$.  
For this we first show that the multipliers $\{{\phi_{k,k_0,q}}\}_{k\geq 1}$ are   bounded uniformly in $k$ and $q$. This follows from
\begin{align*}
|{{\phi}_{k,k_0,q}}| \leq \| \F^{-1}\phi_{k,k_0,q}\|_{\ell^1(\Z)} = \|q\F^{-1}(V_k\chi_{k_0})(qn) \|_{\ell^1(n)},
\end{align*}
and  the right-hand side is further dominated   by
\begin{align*}
q 2^{-\max(k,k_0)} \|(2^{-\max(k,k_0)}qn)^{-3}\|_{\ell^1(n)} \lesssim q 2^{-\max(k,k_0)} \int_{\R} (1+|2^{-\max(k,k_0)}qt|)^{-2} dt \lesssim 1.
\end{align*}
Using boundedness of the continuous Hardy-Littlewood maximal operator 
and Bourgain's transference   from Lemma \ref{lemma:bourgain} applied to the corresponding multipliers in $[-1/2,1/2)$ we obtain  for  a function $g$ on $\Z$
\begin{align*}
  \| \sup_k  | \mathcal{F}^{-1}({\phi_{k,k_0,q}}(\beta) \widehat{g}(\beta))| \|_{\ell^2(\mathbb{Z})}  \lesssim \|g\|_{\ell^2(\mathbb{Z})}.
\end{align*}
By  Lemma \ref{lemma:2dto1d} stated in proven in Appendix \ref{appendix:transf}, we transfer this result to directional averages on $\Z^2$ and   obtain 
  uniform in $v$ boundedness of the convolution operator 
\begin{align*}
\sup_{v \in \mathbb{Z}^2} \Big \| \sup_k \Big |  \sum_{n\in \mathbb{Z}}f_{q,v}(x-n(qv))  \F^{-1} \phi_{k,k_0,q}(n)  \Big |  \Big  \|_{\ell^2(\mathbb{Z}^2)} \lesssim \|f_{q,v}\|_{\ell^2(\mathbb{Z}^2)}.
\end{align*}

Returning to the expression \eqref{est1}, we see that we have reduced the problem to estimating 
\begin{align}\label{finalest}
\sum_{s\in \mathbb{N}_0}2^{s(\delta-1)}\sum_{2^s\le q<2^{s+1}} \Big(\sum_{v\in V}\|f_{q, v}\|^2_{\ell^2(\Z^2)}\Big)^{1/2}.
\end{align}
Now we
 split the sum in $s$ in three regimes. If $s\leq \epsilon \log N$  we use Plancherel, which leaves us with having to bound
 \begin{align*}
\sum_{s\leq  \epsilon \log N } 2^{s(\delta-1)} \sum_{2^s\leq q < 2^{s+1}}\;  \sup_{|\beta|\geq A^{-2}} \Big( \sum_{v\in V}  \mathbf{1}_{K_{q2^{\log N-s},\log N,v}} \,  \Big )^{1/2} \|f\|_{\ell^2(\Z^2)}.
 \end{align*}
 Using   the incidence estimate from  Lemma \ref{prop:incidence} applied to the quantity in the bracket and the trivial estimates for the sum in $q$ and $s$ we obtain a bound of the last display by 
 $$\lesssim_\epsilon N^{C\epsilon}\|f\|_{\ell^2(\Z^2)}$$ where $C$ is an absolute constant. 
  We remark that since the maximal operator is not sensitive to modulations of the function $f$, in this case one could alternatively pass to the spatial side to obtain $N^\epsilon$ copies of the   problem for $s=0$, to which one then applies the incidence estimate. 
 
 If $s>\epsilon \log N$,  we first apply  Cauchy-Schwarz in $q$. 
This dominates \eqref{finalest} by 
\begin{align*}
\sum_{s>\epsilon\log N}2^{s(\delta-1/2)}\bigg(\sum_{2^s\le q<2^{s+1}}\sum_{v\in V}\|f_{q, v}\|_{\ell^2(\Z^2)}^2\bigg)^{1/2},
\end{align*}
which is further bounded by
 \begin{align}\label{boundincidence}
\sum_{s> \epsilon \log N } 2^{s(\delta-1/2)} \sup_{|\beta|\geq A^{-2}} \Big( \sum_{2^s\leq q < 2^{s+1}} \sum_{v\in V}  \mathbf{1}_{K'_{q,s,v}} \,  \Big )^{1/2}\|f\|_{\ell^2(\Z^2)}.
 \end{align}
 Here $K'_{q,s,v}$ is defined analogously to $K_{q,s,v}$, but only reduced rationals are considered, i.e.
 \begin{align*}
K'_{q,s,v} = \{\beta \in \mathbb{T}^2, |\beta|\geq A^{-2}  : |v\cdot \beta - b/q-m|\leq 2^{-C_1s} \textup{ for some } b\leq q, (b,q)=1,\, m\in \Z\}. 
\end{align*}
 
If $\epsilon \log N \leq s < \epsilon^{-1} \log N$ we use the incidence estimate in Lemma \ref{prop:incidence}  and sum a geometric series in $s$ to obtain a bound of \eqref{boundincidence} by 
$$\lesssim_\epsilon N^\epsilon\|f\|_{\ell^2(\Z^2)}.$$

Finally, if  $\epsilon^{-1} \log N \leq s$ we use the fact that for a  fixed $v$ one has
\begin{align*}
\sum_{2^s\leq q < 2^{s+1}} \mathbf{1}_{K_{q,s,v}'}\leq 1.
\end{align*}
Therefore, we may estimate \eqref{boundincidence} by
 \begin{align*}
N^{1/2}   \|f\|_{\ell^2(\Z^2)} \sum_{s> \epsilon^{-1} \log N} 2^{s(\delta-1/2)} \lesssim N^{C \epsilon}  \|f\|_{\ell^2(\Z^2)}.
\end{align*}
This finishes the proof for the high frequency term.

\subsection{Low frequency term $\mathcal{L}_Vf_1$}
 First we consider the case $\epsilon^{-1} \log N < s$. 
 Performing the analogous steps from \eqref{toestimate} to \eqref{finalest}
we see that we have reduced the problem to estimating   
\begin{align} \label{aftercs}
\sum_{s\in \mathbb{N}_0}2^{s(\delta-1)}\sum_{2^s\le q<2^{s+1}} \Big(\sum_{v\in V}\|\widetilde{f}_{q, v}\|^2_{\ell^2(\Z^2)}\Big)^{1/2},
\end{align}
where 
$$\widehat{\widetilde{f}_{q,v}}(\beta) =  \sum_{m\in \Z} \sum_{b\leq q: (b,q)=1}\chi_{k_0-1}(v\cdot \beta -b/q-m) \widehat{f}_1(\beta) .  $$
 By Cauchy-Schwarz in $q$ and Plancherel we bound \eqref{aftercs} by 
$$ \sum_{s> \epsilon^{-1} \log N } 2^{s(\delta-1/2)} \sup_{|\beta|\leq A^{-2}} \Big( \sum_{2^s\leq q < 2^{s+1}} \sum_{v\in V}  \mathbf{1}_{K''_{q,s,v}} \,  \Big )^{1/2}\|f\|_{\ell^2(\Z^2)},$$
where 
\begin{align*}
K''_{q,s,v} = \{|\beta|\leq A^{-2} : |v\cdot \beta - b/q-m|\leq 2^{-C_1s} \textup{ for some }  b\leq q, (b,q)=1,\, m\in \Z\}. 
\end{align*}
Therefore, we may estimate \eqref{aftercs} by
 \begin{align*}
N  \|f\|_{\ell^2(\Z^2)} \sum_{s> \epsilon^{-1} \log N} 2^{s(\delta-1/2)}  \lesssim N^{C \epsilon}  \|f\|_{\ell^2(\Z^2)}.
\end{align*}

It remains to consider the case $s\leq \epsilon^{-1}\log N$. By the lower bound on the totient function \eqref{totient}, it suffices to bound the $\ell^2(\mathbb{Z}^2)$ norm of
\begin{align}\nonumber
\sum_{s\le\epsilon^{-1}\log N}2^{s(\delta-1)}\sup_{v\in V}\sup_{k\ge k_V} \Big|\mathcal{F}^{-1}\Big(\sum_{2^s\le q<2^{s+1}}\sum_{a:\,a/q\in\mathcal{R}_s}&V_k(v\cdot\beta-a/q)\\ \label{second2}
&\chi_{k_0}(v\cdot\beta-a/q)
\widehat{\vartheta}(\beta)\widehat{f}(\beta)\Big)\Big|.
\end{align}
Assuming  $A$ is  larger than $N^{100/\epsilon}$, it follows that for a fixed $s\le\epsilon^{-1}\log N$, the ball of radius $A^{-2}$ in the torus intersects the support of  $V_k(v\cdot\beta-a/q)\chi_{k_0}(v\cdot\beta-a/q) $
for at most $4$ different values of $a/q\in\mathcal{R}_s$. Indeed, this observation follows from the fact that the spacing between tubes is much larger than $A^{-2}$. We may thus conclude that the multiplier corresponding to \eqref{second2} for a fixed $s$, i.e.
\begin{align*}
\Big( \sum_{2^s\le q<2^{s+1}}\sum_{a:\,a/q\in\mathcal{R}_s}V_k(v\cdot\beta-a/q)\chi_{k_0}(v\cdot\beta-a/q)  \Big )\widehat{\vartheta}(\beta),
\end{align*}
is bounded uniformly in $k$ and $v$. 
We would now like to apply Lemma \ref{lemma:bourgain} to each term in \eqref{second2} with $s$ fixed. The hypothesis \eqref{conthyp} is then satisfied with $B=N^{\epsilon}$ and the Euclidean maximal operator $M$ given by the Euclidean directional maximal function, which satisfies the bound by \cite{Katz1}. Hence Lemma \ref{lemma:bourgain} implies that the  $\ell^2(\mathbb{Z}^2)$ norm of \eqref{second2} is bounded by
\begin{align*}
\lesssim_\epsilon N^{\epsilon}\|f\|_{\ell^2(\mathbb{Z}^2)}
\end{align*}
as desired. This finishes the proof.


\appendix

\section{Transference argument}
\label{appendix:transf}

 \begin{lemma}
\label{lemma:2dto1d}
Assume that $\{\phi_k\}_{k\in \Z}$ is a sequence of   functions on $\mathbb{T}$ such that there is a constant $B$ such that for any   $f$ on $\mathbb{Z}$ one has 
 \begin{align}
 \label{opZ}
\big \| \sup_{k}    \big |\F^{-1}(\phi_k\widehat{f}) (x) \big | \big \|_{\ell^2(\Z)}\leq B \|f\|_{\ell^2(\mathbb{Z})} .
\end{align}
Then there is a constant $C$ such that for any   $f$ on $\mathbb{Z}^2$ and any  $0\neq v\in \mathbb{Z}^2$  one has
 \begin{align}
 \label{opZ2}
\big \| \sup_{k}    \big |\F^{-1}(\phi_k(v\cdot  )\widehat{f})(x) \big | \big \|_{\ell^2(\Z^2)}\leq CB  \|f\|_{\ell^2(\mathbb{Z}^2)} .
\end{align}
\end{lemma}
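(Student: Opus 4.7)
The plan is a direct slice decomposition of $\Z^2$ along the direction $v$: the directional operator in \eqref{opZ2} is a convolution that preserves cosets of the cyclic subgroup $\Z v \subset \Z^2$, and on each such coset it coincides exactly with the one-dimensional operator whose boundedness is hypothesized in \eqref{opZ}.

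Concretely, I would first Fourier-expand $\phi_k$ on $\mathbb{T}$ in the paper's convention as $\phi_k(\alpha) = \sum_{n\in\Z} c_{k,n}\, e(-n\alpha)$ with $c_{k,n} = (\F^{-1}\phi_k)(n)$. A direct Fourier inversion on $\mathbb{T}^2$ then shows that the 2D operator is convolution by the measure $\sum_n c_{k,n}\,\delta_{nv}$ supported on $\Z v$:
\begin{align*}
\F^{-1}\big(\phi_k(v\cdot)\widehat{f}\big)(x) = \sum_{n\in\Z} c_{k,n}\, f(x-nv).
\end{align*}

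Next, I would fix any set $\Lambda \subset \Z^2$ of coset representatives for $\Z^2/\Z v$; the hypothesis $v \neq 0$ ensures that $\Z v$ is an infinite subgroup, so this decomposition is nontrivial and yields the orthogonal splitting $\|f\|_{\ell^2(\Z^2)}^2 = \sum_{y \in \Lambda} \|g_y\|_{\ell^2(\Z)}^2$, where $g_y(m) := f(y+mv)$. Because the above convolution only moves points within a single $\Z v$-coset, restricting to $y+\Z v$ gives
\begin{align*}
\F^{-1}\big(\phi_k(v\cdot)\widehat{f}\big)(y+mv) = \sum_{n\in\Z} c_{k,n}\, g_y(m-n) = \F^{-1}\big(\phi_k\, \widehat{g_y}\big)(m),
\end{align*}
where the last equality uses that the 1D Fourier series of $(c_{k,n})_{n\in\Z}$ is exactly $\phi_k$. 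Applying \eqref{opZ} to each slice $g_y$, squaring, and summing over $y \in \Lambda$ by Tonelli immediately produces \eqref{opZ2} with constant $C=1$.

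The main (and minor) obstacle is essentially bookkeeping: one must keep the paper's sign conventions for $\F^{-1}$ consistent throughout, and verify that the 1D multiplier arising from the slice restriction is exactly $\phi_k$, with no spurious phase or reflection, so that \eqref{opZ} can be invoked verbatim. Beyond this, the argument is the standard transference of a directional 2D multiplier operator to its 1D counterpart along an orbit, and requires no further ingredients.
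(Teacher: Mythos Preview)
Your proposal is correct and follows essentially the same approach as the paper: both pass to the spatial side to recognize the operator as convolution along $\Z v$, decompose $\Z^2$ into cosets of $\Z v$, identify each coset with $\Z$, apply the one-dimensional hypothesis \eqref{opZ} on each slice, and sum. Your presentation is in fact slightly cleaner than the paper's (which introduces an auxiliary function $g_y$ on $\Z^2$ rather than directly on $\Z$), and you correctly observe that one obtains $C=1$.
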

\begin{proof}
 Passing to the spatial side,  on the left-hand side of  \eqref{opZ2} we have the norm of 
$$Tf(x) = \sup_k \Big |\sum_{n\in \Z}f(x-nv)\F^{-1}{\phi_k}(n) \Big |.$$
Write 
$$\Z^2 = \bigcup_{x\in \mathbb{Z}^2}\{x+ nv : n\in \mathbb{Z}\}=\bigcup_y L_y,$$
where $L_y$ are disjoint collections of points indexed by conjugacy classes of  $\{ y + nv \}$ (i.e. we identify two lines that  overlap).
Observe that
$$T(f\mathbf{1}_{L_y}) = \mathbf{1}_{L_y}T(f\mathbf{1}_{L_y}),$$
so by sublinearity we may estimate 
$$ Tf = T\Big ( \sum_y f \mathbf{1}_{L_y} \Big ) \leq \sum_y \mathbf{1}_{L_y}
T( f \mathbf{1}_{L_y}).$$
Upon taking $\ell^2$ norms and by disjointness of the lines $L_y$ we have that
\begin{align}
\label{suminy}
\|Tf\|_{\ell^2(\Z^2)}^2 \leq \sum_{y} \|\mathbf{1}_{L_y} T(f\mathbf{1}_{L_y})\|^2_{\ell^2(\Z^2)} = \sum_y \sum_{x\in L_y} \Big | \sup_k \Big | \sum_{n\in \Z} f(x-n v)\F^{-1}\phi_k(n) \Big | \Big |^2 .
\end{align}
For a fixed $y\in \Z^2$ define 
 a function $g_y$ on $\Z^2$ by setting 
 \begin{align*}
 g_y(z) =  \left\{ \begin{array}{ll}
 f(y-a v )&: z = y-(a,0), \, a\in \Z \\
 0 &: \textup{otherwise}
 \end{array}
  \right . .
 \end{align*}
Every $x\in \L_y$ can be written as $x=y-n'v$ for some $n'\in \Z$. 
 Then, for a fixed $y$, the right-hand side of \eqref{suminy} equals  (writing $y=(y_1,y_2)$)
 $$\sum_{n'\in \Z} \Big | \sup_k \Big | \sum_{n\in \Z} g_y(y_1-n-n', y_2) \F^{-1}\phi_k(n) \Big | \Big |^2 \lesssim \sum_{n'\in \Z}| g_y(y_1-n',y_2)|^2,$$
 where we have used the inequality \eqref{opZ}   on the one-dimensional function $g_y(\cdot, y_2)$. Summing in $y$  we obtain for the last display  
 \begin{align*}
 \sum_y\sum_{n'\in\Z}| g_y(y_1-n',y_2)|^2 = \sum_{y} \sum_{n'\in \Z} |f(y-n'v)|^2 = \sum_{x\in \Z^2} |f(x)|^2 = \|f\|^2_{\ell^2(\Z^2)},
 \end{align*}
 which is the desired estimate.
\end{proof}

\section{Proof of Lemma \ref{prop:incidence}}
\label{appendix:incidence}
In this   section we provide a  short summary of the proof of Lemma \ref{prop:incidence}, which originated in \cite{CK}. For more details we refer the reader to \cite{CK}.

To prove Lemma \ref{prop:incidence} it suffices to choose a collection of vectors $\widetilde{V}=\{v_1, v_2, \ldots, v_N\}$ contained  in the   annulus $\{x\in \R^2: \frac{1}{100} \leq  |x| \leq 100\}$, so that  for
$$V=A\widetilde{V}\subseteq\{x\in \Z^2: \frac{1}{100}A \leq  |x| \leq 100A\}$$ the following holds. For any subcollection $S\subseteq \widetilde{V}$ with $|S|=N^{\epsilon}$  and any choice of integers $\{r_{v}\}_{v\in S}$ with $2^{s}\le r_{v}<2^{s+1}$, 
no point   is contained in 
\[ \bigcap_{r_{v} :\, v \in S} \widetilde{K}_{r_{v}, s, v}.\] 
Here we have defined 
\[ \widetilde{K}_{r, s, v} =  \{ \beta \in A\mathbb{T}^2 :  |\beta| \geq A^{-1}, \  | v \cdot \beta - \frac{b}{r}| \leq 2^{-C_1 s} \text{ for some } b \lesssim Ar, b \in \Z  \} 
\]
 for $2^{s}\leq r < 2^{s+1}$. Note that we have  restricted to $b\lesssim Ar$ due to periodicity. 
Recall that $C_1=C_1(A)$ is a large constant, and so the thickness of the tubes is very small comparable to their spacing.

Let $M$ be a sufficiently large integer which will be determined later.
 Denote by $P_M$ the smallest $N^{\epsilon /2}$ primes in 
$[N^{M/\epsilon}, 10N^{M/\epsilon}]$.
By Stirling's approximation
we can find an integer  $\kappa \sim  2\epsilon^{-1}$ with
\begin{align*}
{ N^{\epsilon/2} \choose \kappa}\sim_\epsilon N.
\end{align*}
By adjusting our choice of $\kappa$ and replacing $\epsilon$ by a suitable approximate   we may assume that $N^{M\kappa/\epsilon}$ is an integer. 
 We define $\widetilde{V}=\{v_1, \ldots, v_N\}$ component-wise by setting for $1\leq i \leq N$
\begin{align*}
(v_i)_x & =m_i \,Q_i \,N^{-M\kappa/\epsilon}\, p_{i_1}p_{i_2}\cdots p_{i_{\kappa}},\\
(v_i)_y&=n_i\, Q_i\, N^{-M\kappa/\epsilon}\, p_{i_1}p_{i_2}\cdots p_{i_{\kappa}},
\end{align*}
 with the following constraints.
\begin{itemize}
\item For all $i$, we have $1/4\le n_i/m_i\le 1/2$ with $n_i, m_i>0$.
\item For all $i$, we have $(m_i, n_i)\in  \{x\in \Z^2:\,\frac{1}{10} N^2\leq |x| \leq 10N^2 \}$. 
\item For all $i\ne j$, we have $(m_i, n_i)^{\perp}\cdot (m_j, n_j)\ne 0$.
\item   For all $1\leq j, j' \leq \kappa$, $j\neq j'$, we have   $p_{i_j}\neq p_{i_{j'}}$. 
\item No two $v_i$ have the exact same collection of corresponding primes $\{p_{i_1}, p_{i_2}, \ldots, p_{i_{\kappa}}\}$.
\item Each $Q_i$ is a dyadic number with $ 2^{-100\kappa}N^{-2}\le Q_i\le 2^{100\kappa} N^{-2} $, chosen so that $\frac{1}{10}\leq |v_i|\leq 10$.
\end{itemize}
Note that for any $A\geq N^{2M\kappa/\epsilon}$   we can find $\widetilde{A}$ which is an integer multiple of 
$$N^{M\kappa/\epsilon} \prod_{i: Q_i \leq 1} Q_i^{-1}$$ such that  $\frac{1}{10}A \leq \widetilde{A} \leq 10A$. Then the rescaled vectors $\widetilde{A}v_i$ are integers and satisfy  $\frac{1}{100}A\leq \widetilde{A}|v_i| \leq 100 A$. 

Next we show that the above defined set  of vectors yields the conclusion from Lemma \ref{prop:incidence}. 
By   construction, the  angle between any two distinct vectors in $\widetilde{V}$  is at least $\gtrsim 1/N^{2}$. Therefore, taking $C_1$ sufficiently large, the intersection of any two sets  $\widetilde{K}_{r_1,s, v_1} \cap \widetilde{K}_{r  _2,s, v_2}$ belongs to an $N^{-C_1/2}$-neighborhood of a grid, which is  a lattice with generators parallel to  $v_1^{\perp}$ and $v_2^{\perp}$. 
The $x$-coordinates of all points in this grid belong to an $O(N^{-C_1/2})$-neighborhood of the rank $2$ arithmetic progression 
\begin{align}\label{e:SET}
 \|v_1\|\|v_2\| |\langle v_1^{\perp}, v_2\rangle |^{-1}\big\{\frac{a}{r_1}(v_1^{\perp})_x+\frac{b}{r_2}(v_2^{\perp})_x\,; \, a, b\in\mathbb{Z}, 0\leq a, b \lesssim A^2\big\}.
\end{align}
Therefore, $(\widetilde{K}_{r_1,s, v_1}\cap \widetilde{K}_{r_2,s,v_2})_x$ is contained in an $O(N^{-C_1/2})$ neighborhood of {\eqref{e:SET}}, and we obtain an analogous result for $(\widetilde{K}_{r_1,s, v_1}\cap \widetilde{K}_{r_2,s, v_2})_y$.

Suppose now that we are given $S \subseteq \widetilde{V}$ with $|S| = N^{\epsilon}$.
By Stirling's approximation we may  choose an integer   
$K \sim N^{{\epsilon^2}/{2}}$ such that 
\begin{align*}
{K \choose \kappa}\sim N^{\epsilon/2}.
\end{align*}

\begin{lemma} 
\label{lemma:count}
For any  $S\subseteq \widetilde{V}$ with $|S|=N^\epsilon$, one may choose at least $K$ distinct primes $p_{1}, p_{2}, \ldots, p_{K} \in P_M$ so that the following holds. There exist disjoint sub-collections $S_1,\ldots, S_K$ of $S$, each of cardinality $2$, so that if $S_j = \{ v_{j_1}, v_{j_{2}}\}$, both
\begin{equation}\label{e:pdiv}
(v_{j_1})_y \, N^{M\kappa/\epsilon}\, Q_{j_1}^{-1}\, \text{ and } \, (v_{j_2})_y \, N^{M\kappa/\epsilon}\, Q_{j_2}^{-1}
\end{equation}
are divisible by $p_{j}$ 
 for some $j\leq K$, 
but there exists $S_k$, $k\neq j$, such that the corresponding two factors \eqref{e:pdiv} are not divisble by $p_j$.

\end{lemma}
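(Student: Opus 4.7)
The plan is to convert the divisibility statement into a purely combinatorial problem about $\kappa$-uniform hypergraphs, and then construct the required pairs by a two-step ``anchor plus greedy'' strategy.

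First, I reduce the divisibility condition to set membership. Since every $p \in P_M$ satisfies $p \geq N^{M/\epsilon}$ while the integers $n_i$ are bounded by $10 N^2$, choosing $M$ sufficiently large (depending on $\epsilon$) ensures $p \nmid n_i$. Hence $p \mid (v_i)_y N^{M\kappa/\epsilon} Q_i^{-1} = n_i\, p_{i_1}\cdots p_{i_\kappa}$ if and only if $p \in T_{v_i} := \{p_{i_1}, \ldots, p_{i_\kappa}\}$. The lemma then reduces to finding $K$ distinct primes $p_1, \ldots, p_K \in P_M$ and $K$ pairwise disjoint pairs $S_j = \{v_{j_1}, v_{j_2}\} \subseteq S$ such that (a) $p_j \in T_{v_{j_1}} \cap T_{v_{j_2}}$ for each $j$, and (b) for each $j$ some $S_k$ with $k \neq j$ has $p_j \notin T_{v_{k_1}} \cup T_{v_{k_2}}$.

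Next, I choose an anchor pair. Let $d(p) = |\{v \in S : p \in T_v\}|$, so $\sum_p d(p) = \kappa N^\epsilon$ and the average degree is $\kappa N^{\epsilon/2}$. A counting argument using distinctness of the $T_v$'s rules out the pathological case that every prime with $d(p) \geq 2$ has degree near $N^\epsilon$: if all such primes lay in a fixed set $P^*$ with $|P^*| \leq O(\kappa)$, then almost every $v \in S$ would satisfy $T_v \subseteq P^*$, forcing $|S| \leq \binom{|P^*|}{\kappa} = O_\epsilon(1)$, contradicting $|S| = N^\epsilon$. Therefore some prime $p_1$ satisfies $2 \leq d(p_1) \leq N^\epsilon - 2K$. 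Pick two vectors $v_{1_1}, v_{1_2}$ containing $p_1$, set $S_1 = \{v_{1_1}, v_{1_2}\}$ and $U_1 = T_{v_{1_1}} \cup T_{v_{1_2}}$, so $|U_1| \leq 2\kappa$. Define $S' := \{v \in S \setminus S_1 : p_1 \notin T_v\}$, of size at least $N^\epsilon - d(p_1) - 2 \geq 2K$.

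I then extract the remaining $K - 1$ pairs greedily from $S'$ with marker primes in $P_M \setminus U_1$: at each step, pick an unused prime $p_t \in P_M \setminus U_1$ of degree at least $2$ in the currently remaining vectors of $S'$, choose two such vectors as $S_t$, and remove them. The greedy succeeds for at least $K - 1$ iterations because, after $t \leq K$ steps, the used-prime set $U_1 \cup \{p_2, \ldots, p_t\}$ has cardinality $O(K + \kappa)$; by the defining identity $\binom{K}{\kappa} \sim N^{\epsilon/2}$, the number of remaining vectors whose entire prime set lies inside the used primes is at most $\binom{O(K)}{\kappa} \lesssim N^{\epsilon/2}$, while the other $\gtrsim N^\epsilon$ remaining vectors contribute incidences to unused primes in $P_M \setminus (U_1 \cup \{p_2, \ldots, p_t\})$, yielding some unused prime of degree $\gtrsim N^{\epsilon/2} \gg 2$. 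Condition (b) is then automatic: for $j \geq 2$ the marker $p_j \in P_M \setminus U_1$ lies outside $T_{v_{1_1}} \cup T_{v_{1_2}}$, so $S_1$ witnesses (b); for $j = 1$ any $S_k$ with $k \geq 2$ has both vectors in $S'$, hence neither contains $p_1$.

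The main technical obstacle is the anchor-prime existence step: one must rule out the degenerate case where every prime with $d(p) \geq 2$ is near-universal, and this relies essentially on the distinctness of the $T_v$'s enforced in the construction of $\widetilde V$. The subsequent greedy analysis hinges on the Stirling identity $\binom{K}{\kappa} \sim N^{\epsilon/2}$—the same identity used to calibrate $K$ in the lemma statement—which controls the ``trapped'' vectors whose prime sets fall entirely inside the used primes and guarantees that the process never stalls before reaching $K$ pairs.
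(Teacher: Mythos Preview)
Your strategy---reduce to a hypergraph statement, pick an anchor pair to certify condition (b), then greedily extract the remaining pairs---is essentially the paper's greedy plus counting, carried out with more care about the second clause of the lemma. The overall architecture is sound, but there is a quantitative mismatch between your two steps that leaves a genuine gap.

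You allow the anchor prime $p_1$ to have degree as large as $N^\epsilon - 2K$, and conclude only $|S'| \geq 2K$. But $K \sim N^{\epsilon^2/2}$, so this lower bound is tiny. In the greedy step you then claim ``the other $\gtrsim N^\epsilon$ remaining vectors'' force an unused prime of degree $\gtrsim N^{\epsilon/2}$---this is simply false if $|S'|$ is only of order $K$. Indeed, with $|U| = O(K+\kappa)$ the trapped count $\binom{|U|}{\kappa} \sim N^{\epsilon/2}$ can swallow all of $S'$, and the greedy stalls. The fix is to sharpen the anchor requirement to, say, $2 \leq d(p_1) \leq N^\epsilon/2$, so that $|S'| \geq N^\epsilon/2$ and the greedy runs as you describe.

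Your anchor-existence paragraph also has the containment backwards. From ``every prime of degree $\geq 2$ is near-universal'' you get that most vectors $v$ satisfy $P^* \subseteq T_v$, not $T_v \subseteq P^*$; the bound $\binom{|P^*|}{\kappa}$ therefore does not apply directly. The correct count is: at most $\binom{|P^*|}{\kappa} = O_\epsilon(1)$ vectors have $T_v \subseteq P^*$, while every other vector contains a degree-$1$ prime, and since degree-$1$ primes lie in a unique $T_v$ there are at most $|P_M| = N^{\epsilon/2}$ such vectors---together giving $|S| \leq O_\epsilon(1) + N^{\epsilon/2} < N^\epsilon$, the desired contradiction. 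With these two repairs (tighter anchor threshold, corrected containment direction) your argument goes through.
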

The sets $S_j$ from Lemma \ref{lemma:count} are defined inductively as follows. 
 If $j\leq K$, we take $S_j$ to be any two elements not contained in $\bigcup_{j'<j}S_{j'}$ such that the  associated terms \eqref{e:pdiv} are divisible by some   $p_{j}\notin\{p_{1}, \ldots, p_{{j-1}}\}$, where $p_1,\ldots, p_{j-1}$  have been chosen   in the previous steps of the inductive process. That there are enough primes to make this is possible, it follows by a counting argument.

Given $S_1,\ldots, S_K$ from the lemma, let us denote 
 \begin{align*}
 \mathcal{K}_{j} = \big(\widetilde{K}_{r_{j_1}, s, v_{j_1}}\cap \widetilde{K}_{r_{j_2}, s,v_{j_2}}\big)_x, \quad  \mathcal{K}_{j}^{(2)} = \{\alpha^2 : \alpha\in  \mathcal{K}_{j}\}.
 \end{align*}
where  $v_{j_1},v_{j_2}$ are the two distinct elements of $S_j$, and $2^{s}\leq r_{j_1},r_{j_2} < 2^{s+1}$.
By Lemma \ref{lemma:count}, if $C_1\geq 100M\kappa/\epsilon^3$, we have
$$
\|v_{j_1}\|^{-1}\|v_{j_2}\|^{-1}   |\langle v_{j_1}^{\perp}, v_{j_2}\rangle|\, N^{M\kappa/\epsilon} \, r_{j_1}r_{j_2}\,\mathcal{K}_{j}\subseteq p_{j}\mathbb{Z}+O(N^{-C_1/4}).
$$ 
The lower bound on $C_1$ guarantees that the error term does not increase much in size.
Taking $N^{C_1}\geq A^{100}$  and squaring both sides we  obtain
$$
\|v_{j_1}\|^{-2}\|v_{j_2}\|^{-2} |\langle v_{j_1}^{\perp}, v_{j_2}\rangle|^2  N^{2M\kappa/\epsilon} \, r_{j_1}^2r_{j_2}^2\,\mathcal{K}^{(2)}_{j} \subseteq p_{j}\mathbb{Z}+O(N^{-C_1/5}).
$$ 
Since $m_{j_1}, n_{j_1}, m_{j_2}, n_{j_2}$ are integers of size approximately $N^2$, choosing  $C_1$ is large enough and expanding out the scaling factor on the left-hand  side     further yields
 $$
|m_{j_1}n_{j_2}-n_{j_1}m_{j_2}|\, N^{2M\kappa/\epsilon} \, r_{j_1}^2r_{j_2}^2\,\mathcal{K}_{j}^{(2)} \subseteq p_{j}\mathbb{Z}+O(N^{-C_1/6}).
$$
But then it follows that for  $K'\le C_1/1000$ one has
\begin{align*}
\bigcap_{j=1}^{K'} \mathcal{K}_{j}^{(2)}
\subseteq \Big (N^{2M\kappa/\epsilon}\prod_{j=1}^{K'}\Big(|m_{j_1}n_{j_2}-n_{j_1}m_{j_2}|\, r_{j_1}^2r_{j_2}^2\Big)\Big)^{-1}\, \Big(\prod_{j=1}^{K'}p_{j}\Big)\mathbb{Z}+O(N^{-C_1/9}),
\end{align*}
where $p_1,\ldots, p_{K'}$ are the distinct primes from Lemma \ref{lemma:count}. 
We estimate
$$N^{2M\kappa/\epsilon}\prod_{j=1}^{K'}\Big(|m_{j_1}n_{j_2}-n_{j_1}m_{j_2}|\, r_{j_1}^2r_{j_2}^2\Big)\lesssim N^{2M/\epsilon^3+C_1/100\epsilon}.$$
Since
$\prod_{j=1}^{K'}p_{j}\gtrsim N^{C_1M/(1000\epsilon)},$
for  $M$ sufficiently large depending on $\epsilon$ we obtain 
$$
\Big|\Big(N^{2M\kappa/\epsilon}\prod_{j=1}^{K'}\Big(|m_{j_1}n_{j_2}-n_{j_1}m_{j_2}|\, r_{j_1}^2r_{j_2}^2\Big)\Big)^{-1}\, \Big(\prod_{j=1}^{K'}p_{j}\Big)(\mathbb{Z}\setminus\{0\})\Big|\geq N^{C_1/100}.$$
But $N^{C_1/100}\geq  2^{10}A^2$ if $C_1$ is sufficiently large. Since each  of the sets $\mathcal{K}_{j}^{(2)}$  is a subset of $[-1000A^2, 1000A^2]$, 
it follows that
$$
\bigcap_{j=1}^{K'}\mathcal{K}_{j}^{(2)}\subseteq [-N^{-C_1/7}, N^{-C_1/7}].
$$
Performing  a symmetric argument in the $x$ coordinate we    obtain
$$\bigcap_{j=1}^{K'}\big(\widetilde{K}_{r_{j_1}, s,v_{j_1}}\cap \widetilde{K}_{r_{j_1},s, v_{j_2}}\big)\subseteq B_{N^{-C_1/14}}(0),$$
where $B_R(0)$ denotes the ball of radius $R$ about the origin.
But for $C_1$ sufficiently large, we have $B_{N^{-C_1/14}}(0)\subseteq B_{A^{-1}}(0)$. By definition any set  $\widetilde{K}_{r, s,v}$ has empty intersection with $B_{A^{-1}}(0)$,   so this completes the proof of Lemma \ref{prop:incidence}.



\begin{thebibliography}{03}

\bibitem{Bou2}
J. Bourgain, \emph{Pointwise ergodic theorems for arithmetic sets} Inst. Hautes ́Etudes Sci. Publ. Math.,(69):5–-45, 1989. With an appendix by the author, Harry Furstenberg, Yitzhak Katznelson and Donald S. Ornstein. 

\bibitem{CK} 
L. Cladek, B. Krause, \emph{Discrete analogues in harmonic analysis: directional maximal functions in $\Z^2$}, preprint (2019), arXiv:1901.06070.


\bibitem{D}
C. Demeter, \emph{Singular Integrals Along $N$ Directions in $\mathbb{R}^2$},
Proc. Amer. Math. Soc. 138 (2010), no. 12, 4433--4442. 

\bibitem{D2}
C. Demeter, \emph{$\L^2$ bounds for a Kakeya-type maximal operator in $\R^3$}, Bull. Lond. Math. Soc. 44 (2012), no. 4,
716-–728.

\bibitem{DpP}
F. di Plinio and I. Parissis, \emph{Maximal Directional Operators Along Algebraic Varieties}
Preprint, https://arxiv.org/pdf/1807.08255.pdf.


\bibitem{Katz1}
N. Katz, \emph{Maximal operators over arbitrary sets of directions}, Duke Math. J. 97 (1999),
no. 1, 67--79.

\bibitem{KaTao}
N. Katz, T. Tao, \emph{Some connections between Falconer's distance set conjecture and sets of Furstenburg type.} New York J. Math. 7 (2001), 149--187.


  \bibitem{MTZK}
M. Mirek, B. Trojan, P. Zorin-Kranich, \emph{Variational estimates for averages and truncated singular integrals along the prime numbers}, Trans. Amer. Math. Soc. 369.8 (2017), pp. 5403–-5423. 

\bibitem{St}
J. O. Stro\"omberg, \emph{Maximal functions associated to rectangles with uniformly distributed directions}, Ann. of Math. (2) 107 (1978), 399–-402.


\bibitem{Wi}
M. Wierdl, \emph{Pointwise ergodic theorem along the prime numbers.} Israel J. Math., 64(3):315-–336, 1988.

\end{thebibliography}
\end{document}